\definecolor{my-linkcolor}{rgb}{0.75,0,0}
\definecolor{my-citecolor}{rgb}{0.1,0.57,0}
\definecolor{my-urlcolor}{rgb}{0,0,0.75}
\title{Coset Vertex Algebras}
\author{Thomas Creutzig and Andrew R. Linshaw}
\date{}
\numberwithin{equation}{section}
\theoremstyle{definition}\newtheorem{rema}{Remark}[section]
\theoremstyle{plain}
\newtheorem{theo}[rema]{Theorem}
\theoremstyle{definition}
\theoremstyle{plain}\newtheorem{lemma}[rema]{Lemma}
\newtheorem{corol}[rema]{Corollary}
\newtheorem{conj}[rema]{Conjecture}
\theoremstyle{definition}
\theoremstyle{definition}
\theoremstyle{definition}
\theoremstyle{definition}
\newcommand{\cA}{\mathcal{A}}
\newcommand{\cH}{\mathcal{H}}
\newcommand{\cC}{\mathcal{C}}
\newcommand{\cW}{\mathcal{W}}
\newcommand{\ZZ}{\mathbb{Z}}
\newcommand{\VOA}{{vertex operator algebra}}
\newcommand{\VOsA}{{vertex operator subalgebra}}
\NewDocumentCommand\render{sg}{%
	\IfBooleanTF#1%
	{#2}  
	{}    
}
\begin{document}
	
\title{Cosets of the $\mathcal {W}^k(\mathfrak{sl}_4, f_{\text {subreg}})$-algebra}	
\date{}
\maketitle
\abstract{
Let $\mathcal {W}^k(\mathfrak{sl}_4, f_{\text {subreg}})$ be the universal $\mathcal{W}$-algebra associated to $\mathfrak{sl}_4$ with its subregular nilpotent element, and let $\mathcal {W}_k(\mathfrak{sl}_4, f_{\text {subreg}})$ be its simple quotient. There is a Heisenberg subalgebra $\mathcal{H}$, and we denote by $\mathcal{C}^k$ the coset $\text{Com}(\mathcal{H}, \mathcal {W}^k(\mathfrak{sl}_4, f_{\text {subreg}}))$, and by $\mathcal{C}_k$ its simple quotient. We show that for $k=-4+(m+4)/3$ where $m$ is an integer greater than $2$ and $m+1$ is coprime to $3$, $\mathcal{C}_k$ is isomorphic to a rational, regular $\mathcal W$-algebra $\mathcal{W}(\mathfrak{sl}_m, f_{\text{reg}})$. In particular, $\mathcal{W}_k(\mathfrak{sl}_4, f_{\text {subreg}})$ is a simple current extension of the tensor product of $\mathcal{W}(\mathfrak{sl}_m, f_{\text{reg}})$ with a rank one lattice vertex operator algebra, and hence is rational. }

\setcounter{tocdepth}{2}
\setcounter{secnumdepth}{4}


\allowdisplaybreaks

\section{Introduction}

Given a \VOA{} $V$ and a \VOsA{} $W$ the subalgebra of $V$ that commutes with $W$, $C=\text{Com}(W, V)$ is called a coset \VOA{} of $V$. This was introduced by Frenkel and Zhu in \cite{FZ}, generalizing earlier constructions in \cite{KP} and \cite{GKO}, where it was used to construct the unitary discrete series representations of the Virasoro algebra. It is widely believed that if both $V$ and $W$ satisfy certain nice properties, then so does $C$. For example if $V$ and $W$ are both rational or $C_2$-cofinite then one expects $C$ to be rational or $C_2$-cofinite as well. However, general results are very difficult to obtain.

For $n\geq 4$, let $\mathcal{W}^k(\mathfrak{sl}_n, f_{\text{subreg}})$ denote the $\mathcal{W}$-algebra at level $k$ associated to $\mathfrak{sl}_n$ with its subregular nilpotent element \cite{CM,KRW}, and let $\mathcal{W}_k(\mathfrak{sl}_n, f_{\text{subreg}})$ be the simple quotient. It was recently shown by Genra \cite{Gen} that $\mathcal{W}^k(\mathfrak{sl}_n, f_{\text{subreg}})$ coincides with the Feigin-Semikhatov algebra $\mathcal{W}^{(2)}_n$ \cite{FS}, and is strongly generated by $n+1$ fields of conformal weights $1, 2, \dots, n-1, n/2, n/2$. Note that $\mathcal{W}^{(2)}_n$ is well defined for $n=2$ and $n=3$; $\mathcal{W}^{(2)}_2$ coincides with the affine \VOA{} $V^k(\mathfrak{sl}_2)$, and $\mathcal{W}^{(2)}_3$ coincides with the Bershadsky-Polyakov algebra \cite{Ber,Pol}.

The weight one field of $\mathcal {W}^k(\mathfrak{sl}_n, f_{\text {subreg}})$ generates a Heisenberg algebra $\mathcal{H}$, and we are interested in the coset 
$$\mathcal{C}^k = \text{Com}(\mathcal{H}, \mathcal {W}^k(\mathfrak{sl}_n, f_{\text {subreg}}))$$ for generic values of $k$, as well as the simple quotient $$\mathcal{C}_k = \text{Com}(\mathcal{H}, \mathcal {W}_k(\mathfrak{sl}_n, f_{\text {subreg}}))$$ at certain special values of $k$. It was conjectured in the physics literature over 20 years ago \cite{BEHHH} that there is a sequence of levels $k$ where $\mathcal{C}_k$ is isomorphic to a rational $\mathcal{W}(\mathfrak{sl}_m, f_{\text{reg}})$-algebra, where $m$ depends linearly on $k$. In the cases $n=2$ and $n=3$, where $\mathcal{W}_k(\mathfrak{sl}_n, f_{\text {subreg}})$ is replaced by the simple affine algebra $L_k(\mathfrak{sl}_2)$ and the simple Bershadsky-Polyakov algebra, respectively, this conjecture was proven recently in \cite{ALY} and \cite{ACLI}. We remark that subregular $\mathcal{W}$-algebras of type A have recently become important due to there r\^ole in four-dimensional supersymmetric gauge theories as chiral algebras of Argyres-Douglas theories \cite{BN,C,CS}. Interestingly, these are exactly those levels where the $\mathcal{W}$-algebra has the logarithmic singlet \VOA{} \cite{Kau, AM} as Heisenberg coset \cite{CRW}.

In the case $n=4$, the above conjecture states that for $k=-4+(m+4)/3$, $\mathcal {C}_k$ is isomorphic to a rational $\mathcal{W}(\mathfrak{sl}_m, f_{\text{reg}})$-algebra. Here $m$ is an integer greater than $2$ such that $m+1$ is coprime to $3$. Our main result is a proof of this conjecture. In fact, we prove the stronger statement that for these values of $k$, $\mathcal {W}_k(\mathfrak{sl}_4, f_{\text {subreg}})$ is a simple current extension of $V_L \otimes \mathcal{W}(\mathfrak{sl}_m, f_{\text{reg}})$, where $V_L$ is a certain rank one lattice VOA. As a corollary, we obtain the $C_2$-cofiniteness and rationality of $\mathcal {W}_k(\mathfrak{sl}_4, f_{\text {subreg}})$ for the above values of $k$. Additionally, we show that $\mathcal{C}^k$ is of type $\mathcal{W}(2,3,4,5,6,7,8,9)$ for all values of $k$ except for $k=-2, -5/2, -8/3$. In particular, this strong generating set works for the simple quotient $\mathcal{C}_k$ for the above values of $k$, so this family of rational $\mathcal{W}(\mathfrak{sl}_m, f_{\text{reg}})$-algebras has the following {\it uniform truncation property}. For $m\geq 9$ they are all of type $\cW(2,3,4,5,6,7,8,9)$, even though the universal $\mathcal{W}(\mathfrak{sl}_m, f_{\text{reg}})$-algebra is of type $\mathcal{W}(2,3,\dots, m)$. This happens because a singular vector of weight 10 in the universal algebra gives rise to decoupling relations in the simple quotient expressing the generators of weights $10, 11,\dots, m$ as normally ordered polynomials in the ones up to weight $9$.

Here is a brief sketch of the proof of our result.
\begin{enumerate}

\item Since $\mathcal {C}^k \otimes \mathcal H \cong \mathcal {W}^k(\mathfrak{sl}_4, f_{\text{subreg}})^{U(1)}$, studying $\mathcal {C}^k$ is equivalent to studying the $U(1)$-orbifold $\mathcal{W}^k(\mathfrak{sl}_4, f_{\text {subreg}})^{U(1)}$. We start with an obvious infinite set of generators coming from classical invariant theory in weights $1,2,3,\dots$, and we compute normally ordered relations among these generators starting in weight $10$. We show that for generic values of $k$, all generators in weights $w \geq 10$ can be eliminated using these relations. This shows that $\mathcal {W}^k(\mathfrak{sl}_4, f_{\text {subreg}})^{U(1)}$ is of type $\mathcal{W}(1,2,3,4,5,6,7,8,9)$, and hence that $\mathcal{C}^k$ is of type $\mathcal{W}(2,3,4,5,6,7,8,9)$ for generic $k$.

\item Coefficients in the normally ordered relations are polynomial functions in $k$, and zeros of coefficients correspond to non-generic values of $k$. It turns out that this only happens for $k=-2, -5/2, -8/3$, so these are the only nongeneric values.

\item We prove the conjecture of \cite{BEHHH} by showing that a certain rational $\mathcal{W}(\mathfrak{sl}_m, f_{\text{reg}})$-algebra times a rank one lattice \VOA{} allows for a simple current extension whose OPE algebra coincides with the one of $\mathcal {W}^k(\mathfrak{sl}_4, f_{\text {subreg}})$. The main ingredient is that the Jacobi identity implies that the full OPE algebra is uniquely determined by a small amount of data which is easily shown to coincide in both algebras. Therefore the two \VOA s must be isomorphic. 
\end{enumerate}

This paper is part of a broader program of the authors to study {\it deformable familes} of \VOA s, i.e. \VOA s that depend continuously on one or more parameters. Examples include universal affine vertex algebras and $\mathcal{W}$-algebras, where the parameter is the level $k$, as well as orbifolds and cosets of these algebras. In many situations, the question of finding a minimal strong generating set for an orbifold or coset of such a deformable family can be decided by passing to the limit as $k$ approaches infinity, which is often an orbifold of a free field algebra \cite{CLI}. The structure of orbifolds of free field algebras can then be determined using ideas from classical invariant theory \cite{CLIII, LI, LII, LIII, LIV}.

Besides our pure interest in \VOA{} invariant theory, our findings have quite some impact on important questions we are interested in.
It is widely believed that $\cW$-algebras that are certain quantum Hamiltonian reductions of affine \VOA s, can also be realized as coset algebras. The most famous example is surely the regular $\cW$-algebra of a simply-laced Lie algebra $\mathfrak g$ as a coset of the affine \VOA{} $V^{k+1}(\mathfrak g)$ inside $V^{k}(\mathfrak g) \otimes L_{1}(\mathfrak g)$. Jointly with Tomoyuki Arakawa we are able to prove this coset realization \cite{ACLII}. 
Other examples are the coset of $V^{k+1}(\mathfrak{so}_n)$ inside   $V^{k}(\mathfrak{so}_{n+1})\otimes \mathcal F(n)$ with $\mathcal F(n)$ the free field algebra of $n$ free fermions. This is believed to be a regular $\cW$-algebra of type $\mathfrak{osp}(2n+1|2n)$ and Lemma 7.17 of \cite{CLI} tells us that this conjecture is indeed consistent with minimal strong generating sets. These families of $\cW$-algebras all carry an action of the $N=1$ super Virasoro algebra. The $N=2$ super Virasoro case corresponds to the cosets of $V^{k+1}(\mathfrak{sl}_n)$ inside $V^{k}(\mathfrak{sl}_{n+1})\otimes \mathcal F(2n)$ and the expected $\cW$-algebra is a regular $\cW$-algebra of type $\mathfrak{sl}(n+1|n)$.
Lemma 7.12 of \cite{CLI} confirms this on the level of minimal strong generating sets. It is a rather ambitious future goal to indeed prove these conjectures. Correctness of these conjectures is the starting assumption for the (super) higher spin gravity on AdS$_3$ to two-dimensional (super) conformal field theory correspondence of \cite{GG, CHR1, CHR2}. 

There are many more conjectures emerging from deformable families of \VOA s that come from the physics of four-dimensional supersymmetric gauge theories \cite{CGai}. These involve cosets of affine \VOA s inside certain $\cW$-algebras. Besides their apparent importance in physics \cite{Gaiotto}, they also relate to the quantum geometric Langlands correspondence \cite{CGai,AFO}. Here the starting point is a series of conjectural deformable families of \VOA s extending the tensor product of two affine \VOA s with Langlands dual Lie algebras; see the introduction of \cite{CGai} for a list of these conjectures.

\section{Vertex algebras}
In this section, we define vertex algebras, which have been discussed from various points of view in the literature (see for example \cite{Bor,FLM,K}). We will follow the formalism developed in \cite{LZ} and partly in \cite{LiI}. Let $V=V_0\oplus V_1$ be a super vector space over $\mathbb{C}$, and let $z,w$ be formal variables. By $\text{QO}(V)$, we mean the space of linear maps $$V\rightarrow V((z))=\{\sum_{n\in\mathbb{Z}} v(n) z^{-n-1}|
v(n)\in V,\ v(n)=0\ \text{for} \ n>\!\!>0 \}.$$ Each element $a\in \text{QO}(V)$ can be represented as a power series
$$a=a(z)=\sum_{n\in\mathbb{Z}}a(n)z^{-n-1}\in \text{End}(V)[[z,z^{-1}]].$$ We assume that $a=a_0+a_1$ where $a_i:V_j\rightarrow V_{i+j}((z))$ for $i,j\in\mathbb{Z}/2\mathbb{Z}$, and we write $|a_i| = i$.

For each $n \in \mathbb{Z}$, we have a nonassociative bilinear operation on $\text{QO}(V)$, defined on homogeneous elements $a$ and $b$ by
$$ a(w)_{(n)}b(w)=\text{Res}_z a(z)b(w)\ \iota_{|z|>|w|}(z-w)^n- (-1)^{|a||b|}\text{Res}_z b(w)a(z)\ \iota_{|w|>|z|}(z-w)^n.$$
Here $\iota_{|z|>|w|}f(z,w)\in\mathbb{C}[[z,z^{-1},w,w^{-1}]]$ denotes the power series expansion of a rational function $f$ in the region $|z|>|w|$. For $a,b\in \text{QO}(V)$, we have the following identity of power series known as the {\it operator product expansion} (OPE) formula.
 \begin{equation}\label{opeform} a(z)b(w)=\sum_{n\geq 0}a(w)_{(n)} b(w)\ (z-w)^{-n-1}+:a(z)b(w):. \end{equation}
Here $:a(z)b(w):\ =a(z)_-b(w)\ +\ (-1)^{|a||b|} b(w)a(z)_+$, where $a(z)_-=\sum_{n<0}a(n)z^{-n-1}$ and $a(z)_+=\sum_{n\geq 0}a(n)z^{-n-1}$. Often, \eqref{opeform} is written as
$$a(z)b(w)\sim\sum_{n\geq 0}a(w)_{(n)} b(w)\ (z-w)^{-n-1},$$ where $\sim$ means equal modulo the term $:a(z)b(w):$, which is regular at $z=w$. 

Note that $:a(w)b(w):$ is a well-defined element of $\text{QO}(V)$. It is called the {\it Wick product} or {\it normally ordered product} of $a$ and $b$, and it
coincides with $a_{(-1)}b$. For $n\geq 1$ we have
$$ n!\ a(z)_{(-n-1)} b(z)=\ :(\partial^n a(z))b(z):,\qquad \partial = \frac{d}{dz}.$$
For $a_1(z),\dots ,a_k(z)\in \text{QO}(V)$, the $k$-fold iterated Wick product is defined inductively by
\begin{equation}\label{iteratedwick} :a_1(z)a_2(z)\cdots a_k(z):\ =\ :a_1(z)b(z):,\qquad b(z)=\ :a_2(z)\cdots a_k(z):.\end{equation}
We often omit the formal variable $z$ when no confusion can arise.

A subspace $\cA\subset \text{QO}(V)$ containing $1$ which is closed under all the above products will be called a {\it quantum operator algebra} (QOA). We say that $a,b\in \text{QO}(V)$ are {\it local} if $$(z-w)^N [a(z),b(w)]=0$$ for some $N\geq 0$. Here $[,]$ denotes the super bracket. This condition implies that $a_{(n)}b = 0$ for $n\geq N$, so (\ref{opeform}) becomes a finite sum. Finally, a {\it vertex algebra} will be a QOA whose elements are pairwise local. This notion is well known to be equivalent to the notion of a vertex algebra in the sense of \cite{FLM}. 

A vertex algebra $\cA$ is said to be {\it generated} by a subset $S=\{a_i|\ i\in I\}$ if $\cA$ is spanned by words in the letters $a_i$, and all products, for $i\in I$ and $n\in\mathbb{Z}$. We say that $S$ {\it strongly generates} $\cA$ if $\cA$ is spanned by words in the letters $a_i$, and all products for $n<0$. Equivalently, $\cA$ is spanned by $$\{ :\partial^{k_1} a_{i_1}\cdots \partial^{k_m} a_{i_m}:| \ i_1,\dots,i_m \in I,\ k_1,\dots,k_m \geq 0\}.$$ As a matter of notation, we say that a vertex algebra $\cA$ is of type $$\cW(d_1,\dots, d_r)$$ if it has a minimal strong generating set consisting of one field in each weight $d_1,\dots, d_r$.

Given fields $a,b,c$ in any vertex algebra $\mathcal{V}$, and integers $m,n \geq 0$, the following identities are known as {\it Jacobi relations} of type $(a,b,c)$. 
\begin{equation} \label{jacobi} a_{(r)}(b_{(s)} c) = (-1)^{|a||b|} b_{(s)} (a_{(r)}c) + \sum_{i =0}^r \binom{r}{i} (a_{(i)}b)_{(r+s - i)} c.\end{equation} For a fixed choice of fields $a,b,c$, these identites are nontrivial for only finitely many integers $m,n$.

Given a vertex algebra $\mathcal{V}$ and a vertex subalgebra $\mathcal{A} \subset \mathcal{V}$, the coset (or commutant) of $\mathcal{A}$ in $\mathcal{V}$, denoted by $\text{Com}(\mathcal{A},\mathcal{V})$, is the subalgebra of elements $v\in\mathcal{V}$ such that $[a(z),v(w)] = 0$ for all $a\in\mathcal{A}$. Equivalently, $v\in \text{Com}(\mathcal{A},\mathcal{V})$ if and only if $a_{(n)} v = 0$ for all $a\in\mathcal{A}$ and $n\geq 0$.

\section{The algebra $\cW^k(\mathfrak{sl}_4, f_{\text{subreg}})$}
Let $\cW^k(\mathfrak{sl}_4, f_{\text{subreg}})$ denote that $\cW$-algebra at level $k$ associated to $\mathfrak{sl}_4$ with its subregular nilpotent element $f_{\text{subreg}}$. By a theorem of Genra \cite{Gen}, it coincides with the algebra $\cW^{(2)}_4$ \cite{FS} in the family $\cW^{(2)}_{n}$ constructed by Feigin and Semikhatov \cite{FS}. It is freely generated by fields $J, T, W, G^{\pm}$ of weights $1,2,3,2,2$, respectively, satisfying the following OPEs.

\begin{equation} \label{OPE:FS4a} \begin{split} T(z) T(w) &\sim - \frac{(8 + 3 k) (17 + 8 k)}{2 (4 + k)} (z-w)^{-4} + 2T(w) (z-w)^{-2} + \partial T(w)(z-w)^{-1},\\ T(z) J(w) &\sim J(w)(z-w)^{-2} + \partial J(w)(z-w)^{-1},
\\T(z) W(w) &\sim 3W(w)(z-w)^{-2} + \partial W(w)(z-w)^{-1},
\\ T(z) G^{\pm} (w) &\sim 2G^{\pm} (w)(z-w)^{-2} + \partial G^{\pm} (w)(z-w)^{-1},
\\ J(z) J(w) &\sim (2 + \frac{3 k}{4})(z-w)^{-2},
\\ J(z) G^{\pm} (w) &\sim \pm G^{\pm}(w)(z-w)^{-1},\end{split} \end{equation}

\begin{equation} \label{OPE:FS4b} \begin{split} W(z) G^{\pm} (w) &\sim \pm \frac{2 (4 + k) (7 + 3 k) (16 + 5 k)}{(8 + 3 k)^2} G^{\pm}(w)(z-w)^{-3}  \\ + &
\bigg(\pm \frac{3 (4 + k) (16 + 5 k)}{2 (8 + 3 k)} \partial G^{\pm}   
 - \frac{6 (4 + k) (16 + 5 k)}{(8 + 3 k)^2} :J G^{\pm}: \bigg)(w)(z-w)^{-2} 
\\ &+ \bigg(-\frac{8 (3 + k) (4 + k)}{(2 + k) (8 + 3 k)} : J \partial G^{\pm}:   - \frac{4 (4 + k) (16 + 15 k + 3 k^2)}{(2 + k) (8 + 3 k)^2} :(\partial J) G^{\pm}: \\ & \pm \frac{(3 + k) (4 + k)}{2 + k} \partial^2 G^{\pm}  \mp \frac{2 (4 + k)^2}{(2 + k) (8 + 3 k)} :TG^{\pm}: \\ & \pm  \frac{4 (4 + k) (16 + 5 k)}{(2 + k) (8 + 3 k)^2} :JJ G^{\pm}:    \bigg)(w)(z-w)^{-1} ,\end{split} \end{equation}

\begin{equation} \label{OPE:FS4c} \begin{split} G^+(z) G^-(w)  &\sim  (2 + k) (5 + 2 k) (8 + 3 k) (z-w)^{-4} +  4 (2 + k) (5 + 2 k) J(w)(z-w)^{-3}
\\ &+ \bigg(-(2 + k) (4 + k) T + 6 (2 + k) :JJ: +2 (2 + k) (5 + 2 k) \partial J\bigg)(w)(z-w)^{-2} \\&+ \bigg( (k+2) W + \frac{8 (2 + k) (32 + 11 k)}{3 (8 + 3 k)^2} :JJJ:   -\frac{4 (2 + k) (4 + k)}{8 + 3 k} :TJ: +6 (2 + k) :(\partial J)J: \\& -\frac{1}{2} (2 + k) (4 + k) \partial T 
 + \frac{4 (2 + k) (26 + 17 k + 3 k^2)}{3 (8 + 3 k)} \partial^2 J\bigg)(w)(z-w)^{-1}.\end{split} \end{equation}
 
\begin{equation} \label{OPE:FS4d} W(z) W(w) \sim \frac{2 (k + 4) (2 k + 5) (3 k + 7) (5 k + 16)}{3 k + 8} (z-w)^{-6} + \cdots, \end{equation}
The remaining terms in the OPE of $W(z)W(w)$ have been omitted but can be found in the paper \cite{FS}. As in \cite{ACLI}, a {\it weak increasing filtration} on a vertex algebra $\cA$ is a $\mathbb{Z}_{\geq 0}$-filtration
\begin{equation}\label{weak} \cA_{(0)}\subset\cA_{(1)}\subset\cA_{(2)}\subset \cdots,\qquad \cA = \bigcup_{d\geq 0}
\cA_{(d)}\end{equation} such that for $a\in \cA_{(r)}$, $b\in\cA_{(s)}$, we have
\begin{equation} a\circ_n b \in \cA_{(r+s)},\qquad n\in \mathbb{Z}.\end{equation}
Then the associated graded algebra $\text{gr}(\cA) = \bigoplus_{d\geq 0}\cA_{(d)}/\cA_{(d-1)}$ is a vertex algebra, and a strong generating set for $\text{gr}(\cA)$ lifts to a strong generating set for $\cA$; see Lemma 4.1 of \cite{ACLI}. We define a filtration \begin{equation} \label{filt} \cW^k(\mathfrak{sl}_4, f_{\text{subreg}})_{(0)} \subset \cW^k(\mathfrak{sl}_4, f_{\text{subreg}})_{(1)} \subset \cW^k(\mathfrak{sl}_4, f_{\text{subreg}})_{(2)}  \subset \cdots \end{equation} on $\cW^k(\mathfrak{sl}_4, f_{\text{subreg}})$ as follows: $\cW^k(\mathfrak{sl}_4, f_{\text{subreg}})_{(-1)} = \{0\}$, and $\cW^k(\mathfrak{sl}_4, f_{\text{subreg}})_{(r)}$ is spanned by iterated Wick products of the generators $J,T,W,G^{\pm}$ and their derivatives, such that at most $r$ of the fields $W,G^{\pm}$ and their derivatives appear. 
It is clear from the defining OPE relations that this is a weak increasing filtration.

\section{The $U(1)$-orbifold of $\cW^k(\mathfrak{sl}_4, f_{\text{subreg}})^{U(1)}$}

The action of the zero mode $J_0$ integrates to a $U(1)$-action on $\cW^k(\mathfrak{sl}_4, f_{\text{subreg}})$, and the orbifold $\cW^k(\mathfrak{sl}_4, f_{\text{subreg}})^{U(1)}$ is just the kernel of $J_0$. Since $J,T,W$ lie in $(\cW^k(\mathfrak{sl}_4, f_{\text{subreg}})^{U(1)}$ and $J_0(G^{\pm}) = \pm G^{\pm}$, it is immediate that $\cW^k(\mathfrak{sl}_4, f_{\text{subreg}})^{U(1)}$ is spanned by all normally ordered monomials of the form 
\begin{equation} \label{standardmonomial} \begin{split} :(\partial^{a_1} T) \cdots (\partial^{a_i} T) (\partial^{b_1} J )\cdots (\partial^{b_j} J) (\partial^{c_1} W) \cdots (\partial^{c_r} W) (\partial^{d_1} G^+) \\ \cdots (\partial^{d_s} G^+)( \partial^{e_1} G^-) \cdots (\partial^{e_s} G^-):,\end{split} \end{equation} where $i,j,r,s\geq 0$ and $a_1\geq \cdots \geq a_i \geq 0$, $b_1\geq \cdots \geq b_j \geq 0$, $c_1\geq \cdots \geq c_r \geq 0$, $d_1\geq \cdots \geq d_s \geq 0$, and $e_1\geq \cdots \geq e_s \geq 0$. 

The filtration \eqref{filt} on $\cW^k(\mathfrak{sl}_4, f_{\text{subreg}})$ restricts to a weak increasing filtration on $\cW^k(\mathfrak{sl}_4, f_{\text{subreg}})^{U(1)}$ where $$(\cW^k(\mathfrak{sl}_4, f_{\text{subreg}})^{U(1)})_{(r)} = \cW^k(\mathfrak{sl}_4, f_{\text{subreg}})^{U(1)} \cap \cW^k(\mathfrak{sl}_4, f_{\text{subreg}})_{(r)}.$$ Define
$$U_{i,j} =\ :\partial^i G^+ \partial^j G^-: ,$$ which lies in $(\cW^k(\mathfrak{sl}_4, f_{\text{subreg}})^{U(1)})_{(2)}$ and has weight $i+j+4$. By the same argument as the proof of Lemma 5.1 of \cite{ACLI}, we have

\begin{lemma} $\cW^k(\mathfrak{sl}_4, f_{\text{subreg}})^{U(1)}$ is strongly generated as a vertex algebra by
\begin{equation} \label{gen} \{J, T, W, U_{0,m}|\ m\geq 0\}.\end{equation}
\end{lemma}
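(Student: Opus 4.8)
The plan is to mimic the proof of Lemma 5.1 of \cite{ACLI}, adapting it from the Bershadsky--Polyakov setting to the case of $\cW^k(\mathfrak{sl}_4, f_{\text{subreg}})$. First I would observe that by the explicit OPE relations \eqref{OPE:FS4a}--\eqref{OPE:FS4d}, the fields $J, T, W$ already lie in $\cW^k(\mathfrak{sl}_4, f_{\text{subreg}})^{U(1)}$, and every normally ordered monomial \eqref{standardmonomial} spanning the orbifold has an equal number $s$ of $G^+$-type and $G^-$-type letters (this is forced by $J_0$-weight zero). So it suffices to show that any such monomial can be rewritten, modulo lower terms in the filtration, as a normally ordered polynomial in $J$, $T$, $W$, and the fields $U_{0,m}$ together with their derivatives.

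The key step is an induction on the filtration degree $r$ (the number of $W, G^{\pm}$ letters). The base cases $r=0,1$ are clear since a monomial with no $G^{\pm}$ letters is already a polynomial in $J,T,W$ and their derivatives, and a monomial with exactly one $G^{\pm}$-type letter would have nonzero $J_0$-weight, hence cannot occur. For the inductive step, a general monomial of degree $r$ in the orbifold contains some product of the form $(\partial^{d_1}G^+)\cdots(\partial^{d_s}G^+)(\partial^{e_1}G^-)\cdots(\partial^{e_s}G^-)$ with $s\geq 1$, multiplied by a polynomial in $J,T,W$ and derivatives. I would pull out one $\partial^{d_1}G^+$ and one $\partial^{e_1}G^-$, recognize that their Wick product is $\partial$-expressible in terms of $U_{0,d_1+e_1}$ plus normally ordered corrections involving fewer $G$'s (using $:(\partial^i G^+)(\partial^j G^-): = $ a linear combination of $\partial^\ell U_{0, i+j-\ell}$ modulo terms of lower $G$-degree coming from rearranging the Wick product), and then use the general rearrangement lemmas for iterated normally ordered products (as in \cite{ACLI}, Section 4, or \cite{LL, LZ}) to move these to the outside of the monomial at the cost of lower-filtration terms. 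This reduces $s$ by one, and iterating brings us down to $s=0$. Throughout, all correction terms coming from reordering Wick products land in strictly lower filtration degree by the weak increasing filtration property, so the induction closes. Finally, one applies the standard fact quoted in the excerpt: a strong generating set for $\mathrm{gr}(\cA)$ lifts to one for $\cA$, together with Lemma 4.1 of \cite{ACLI}.

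The main obstacle I anticipate is bookkeeping rather than any conceptual difficulty: one must verify carefully that the extra fields $W$ and its derivatives (absent in the Bershadsky--Polyakov case, where the analogous algebra has one fewer generator) do not obstruct the reduction, i.e.\ that products like $:W (\partial^i G^+)(\partial^j G^-):$ still reduce to polynomials in the claimed generators modulo lower terms. Since $W$ commutes with $J_0$ and appears polynomially, it can simply be carried along as a spectator throughout the rearrangement, so this is a routine extension of the argument. A second minor point is to confirm that the quasi-derivation and associativity formulas for Wick products, which are used to move letters past one another, introduce only terms of equal or lower filtration degree — but this is exactly the content of the weak increasing filtration condition \eqref{weak} already established for \eqref{filt}. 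Hence the proof is a direct adaptation, and I would simply cite the proof of Lemma 5.1 of \cite{ACLI} for the details, as the authors have indicated.
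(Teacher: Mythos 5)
Your proposal is correct and follows essentially the same route as the paper, which itself gives no details and simply invokes the argument of Lemma 5.1 of \cite{ACLI}: use the spanning set \eqref{standardmonomial}, the weak increasing filtration \eqref{filt}, and rewrite each $G^+$/$G^-$ pair via the quadratics $U_{i,j}$ (reduced to $\partial^{\ell}U_{0,m}$) modulo lower filtration terms. No gaps beyond routine bookkeeping, which is exactly what the paper leaves implicit.
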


However, $\cW^k(\mathfrak{sl}_4, f_{\text{subreg}})^{U(1)}$ is not freely generated by \eqref{gen}. There is a relation of weight $10$ of the form
$$\frac{(2 + k) (5 + 2 k) (8 + 3 k)}{360} U_{0,6} = \ :U_{0,0} U_{1,1}: - :U_{0,1} U_{1,0}: + \cdots,$$ where the remaining terms are normally ordered monomials in $T,J,W,U_{0,i}$ and their derivatives, for $i\leq 5$. We can regard $:U_{0,0} U_{1,1}: - :U_{0,1} U_{1,0}:$ as the analogue of a classical relation which does not vanish due to the nonassociativity of the Wick product, and the remaining terms provide the necessary corrections to make it a genuine relation. This relation is unique up to scalar multiples, and the coefficient of $U_{0,6}$ is canonical in the sense that it does not depend on any choices of normal ordering in the expression on the right side. In particular, we see that $U_{0,6}$ decouples for all $k \neq -2, -5/2, -8/3$.

Similarly, for all $n>1$ we have relations
$$\frac{n (9 + n)(2 + k) (5 + 2 k) (8 + 3 k)}{120 (4 + n) (5 + n)} U_{0,n+5} = \ :U_{0,0} U_{1,n}: - :U_{0,n} U_{1,0}: + \cdots$$ where the remaining terms are normally ordered monomials in $T,J,W,U_{0,i}$ and their derivatives, for $i\leq 5$. The proof is similar to the proof of Theorem 5.4 of \cite{ACLI}. Again, the coefficient of $U_{0,n+5}$ is canonical, and this shows that $U_{0,n+5}$ can be decoupled for all $n>1$ whenever $k \neq -2, -5/2, -8/3$. We obtain

\begin{theo} For all $k \neq -2, -5/2, -8/3$, $\cW^k(\mathfrak{sl}_4, f_{\text{subreg}})^{U(1)}$ has a minimal strong generating set $$\{J,T,W,U_{0,i}|\ i\leq 5\},$$ and in particular is of type $\cW(1,2,3,4,5,6,7,8,9)$.
\end{theo}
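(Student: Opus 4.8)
The plan is to establish the theorem by combining the two families of decoupling relations displayed immediately above with the strong generation statement of the preceding Lemma. By that Lemma, $\cW^k(\mathfrak{sl}_4, f_{\text{subreg}})^{U(1)}$ is strongly generated by $\{J,T,W,U_{0,m}\mid m\geq 0\}$. The weight $10$ relation expresses $U_{0,6}$ (up to the nonzero scalar $\tfrac{(2+k)(5+2k)(8+3k)}{360}$, which is nonzero precisely for $k\neq -2,-5/2,-8/3$) as a normally ordered polynomial in $J,T,W$ and $U_{0,i}$ with $i\leq 5$ and their derivatives; similarly, for each $n>1$ the displayed weight $n+9$ relation expresses $U_{0,n+5}$ as such a polynomial, since the coefficient $\tfrac{n(9+n)(2+k)(5+2k)(8+3k)}{120(4+n)(5+n)}$ is nonzero under the same hypothesis on $k$.

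The main step is then an induction on weight showing that every $U_{0,m}$ with $m\geq 6$ can be eliminated, so that $\{J,T,W,U_{0,i}\mid i\leq 5\}$ already strongly generates. First I would record that $U_{0,6}$ decouples via the weight $10$ relation. For the inductive step I would fix $m\geq 7$, write $m = n+5$ with $n = m-5 \geq 2$, and invoke the corresponding relation; its right-hand side involves $:U_{0,0}U_{1,n}: - :U_{0,n}U_{1,0}:$ together with normally ordered monomials in $J,T,W,U_{0,i}$ ($i\leq 5$) and their derivatives. The point here is that $U_{0,n}$ and its derivatives have weight strictly less than $m$, so by the inductive hypothesis they lie in the subalgebra generated by $\{J,T,W,U_{0,i}\mid i\leq 5\}$. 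For the terms $U_{1,n}$ and $U_{1,0}$ one uses $U_{1,j} = \partial U_{0,j} - U_{0,j+1}$ (from $\partial(:G^+ \, \partial^j G^-:) = :(\partial G^+)\partial^j G^-: + :G^+ \partial^{j+1} G^-:$, i.e.\ $\partial U_{0,j} = U_{1,j} + U_{0,j+1}$), which reduces any $U_{1,j}$ to $U_{0,*}$ fields of weight $\leq j+5 < m$ (for the relevant values $j = 0$ and $j = n = m-5$), again handled by the inductive hypothesis. Hence the entire right side lies in the subalgebra generated by the finite set, and so does $U_{0,m}$.

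Having shown strong generation by $\{J,T,W,U_{0,i}\mid i\leq 5\}$ — one field in each weight $1,2,3,4,5,6,7,8,9$ respectively — it remains to check minimality, i.e.\ that none of these nine generators can be removed. I would argue this by a dimension count in low weights: using the explicit OPEs \eqref{OPE:FS4a}--\eqref{OPE:FS4d} (or their limit/associated graded form) one computes the graded dimension of $\cW^k(\mathfrak{sl}_4, f_{\text{subreg}})^{U(1)}$ weight by weight through weight $9$, and compares it with the graded dimension of the free vertex algebra on generators in weights $1,\dots,9$; since the first relation only appears in weight $10$, the dimensions agree through weight $9$, which forces each of the nine generators to be indispensable. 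This yields that the minimal strong generating set is exactly $\{J,T,W,U_{0,i}\mid i\leq 5\}$ and the type is $\cW(1,2,3,4,5,6,7,8,9)$.

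The main obstacle I anticipate is not the inductive decoupling argument, which is essentially formal once the two relation families are granted, but rather justifying that the displayed relations genuinely hold with the stated canonical leading coefficients — i.e.\ the claim that the coefficient of $U_{0,n+5}$ is independent of normal-ordering choices and is the displayed polynomial in $k$. That verification (deferred in the excerpt to "the same argument as" Theorem 5.4 of \cite{ACLI}) rests on controlling, in the associated graded algebra $\mathrm{gr}$, the leading symbol of $:U_{0,0}U_{1,n}: - :U_{0,n}U_{1,0}:$ and showing it lies in the span of $U_{0,*}$ modulo decomposable terms; a secondary subtlety is making the induction on weight interact cleanly with the filtration \eqref{filt}, so that each elimination step stays within the correct filtered piece. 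Once those structural facts are in hand, the proof of the theorem as stated is a short formal consequence.
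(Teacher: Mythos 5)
Your proposal is correct and takes essentially the same route as the paper: combine the preceding Lemma's infinite generating set with the two displayed families of decoupling relations, whose leading coefficients $\tfrac{(2+k)(5+2k)(8+3k)}{360}$ and $\tfrac{n(9+n)(2+k)(5+2k)(8+3k)}{120(4+n)(5+n)}$ are nonzero exactly when $k\neq -2,-5/2,-8/3$, and eliminate $U_{0,m}$ for $m\geq 6$ inductively, using $U_{1,j}=\partial U_{0,j}-U_{0,j+1}$ to handle the quadratic terms (the paper leaves this induction, and the minimality check you sketch, implicit). The substantive input you correctly flag --- that the relations hold with canonical leading coefficients independent of normal-ordering choices --- is likewise deferred in the paper to a computer calculation and to the argument of Theorem 5.4 of \cite{ACLI}.
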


\section{The Heisenberg coset of $\cW^k(\mathfrak{sl}_4, f_{\text{subreg}})$}

Let $\cH \subset \cW^k(\mathfrak{sl}_4, f_{\text{subreg}})$ denote the copy of the Heisenberg vertex algebra generated by $J$, and let $\cC^{k}$ denote the commutant $\text{Com}(\cH, \cW^k(\mathfrak{sl}_4, f_{\text{subreg}}))$. Note that $$\cW^k(\mathfrak{sl}_4, f_{\text{subreg}})^{U(1)}  \cong \cH \otimes \cC^{k}$$ and $\cC^{k}$ has a Virasoro element $$T^{\cC} = T - \frac{2}{8 + 3 k} :JJ:$$  of central charge $$c = -\frac{4 (5 + 2 k) (7 + 3 k)}{4 + k}.$$ Also, it is clear from the OPE algebra that $W\in \cC^k$. By a straightforward computer calculation, we obtain

\begin{theo} \label{truncation} For $0\leq i \leq 5$, and $k \neq -2, -5/2, -8/3$, there exist correction terms $\omega_i \in \cW^k(\mathfrak{sl}_4, f_{\text{subreg}})^{U(1)} $ such that $U^{\cC}_i = U_{0,i} + \omega_i$ lies in $\cC^{k}$. Therefore $\cC^{k}$ has a minimal strong generating set $\{T^{\cC}, W, U^{\cC}_i|\ 0\leq i \leq 5\}$, and is therefore of type $\cW(2,3,4,5,6,7,8,9)$.  \end{theo}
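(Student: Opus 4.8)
The plan is to deduce Theorem~\ref{truncation} from the orbifold description $\cW^k(\mathfrak{sl}_4, f_{\text{subreg}})^{U(1)} \cong \cH \otimes \cC^k$ together with the minimal strong generating set $\{J,T,W,U_{0,i}\mid 0\le i\le 5\}$ established in the previous section. First I would use the tensor-product splitting to decompose any field of $\cW^k(\mathfrak{sl}_4, f_{\text{subreg}})^{U(1)}$ in the form $\sum_j h_j\otimes a_j$ with $h_j\in\cH$ and $a_j\in\cC^k$; the projection onto the $\cH^{\ge 1}$-free summand, i.e.\ the component lying in $\unit\otimes\cC^k$, is well defined because $\cH$ is freely generated by $J$ and acts semisimply. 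Applying this projection to $U_{0,i}$ produces the corrected field $U^{\cC}_i = U_{0,i}+\omega_i$, where $\omega_i$ is a normally ordered polynomial in $J$ (and its derivatives) times lower-weight elements of $\cC^k$, hence $\omega_i\in\cW^k(\mathfrak{sl}_4, f_{\text{subreg}})^{U(1)}$; one checks $\omega_i$ lowers nothing in weight, so $U^{\cC}_i$ still has weight $i+4$. Concretely, $\omega_i$ is determined by requiring $J_{(n)}U^{\cC}_i=0$ for all $n\ge 0$, which is a finite linear system in each weight; this is exactly the ``straightforward computer calculation'' referenced.

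Next I would argue that $\{T^{\cC},W,U^{\cC}_i\mid 0\le i\le 5\}$ strongly generates $\cC^k$. Since $\{J,T,W,U_{0,i}\mid 0\le i\le 5\}$ strongly generates the orbifold, every element of $\cC^k\subset\cW^k(\mathfrak{sl}_4, f_{\text{subreg}})^{U(1)}$ is a normally ordered polynomial in these fields and their derivatives. Rewriting $T=T^{\cC}+\frac{2}{8+3k}:\!JJ\!:$ and $U_{0,i}=U^{\cC}_i-\omega_i$, and expanding $\omega_i$ in terms of $J$, $T^{\cC}$, $W$, and the $U^{\cC}_j$ with $j<i$, one expresses any element of the orbifold as a polynomial in $J$, $T^{\cC}$, $W$, $U^{\cC}_0,\dots,U^{\cC}_5$. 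Applying the projection onto $\unit\otimes\cC^k$ kills exactly the monomials containing at least one factor of $J$ or $\partial^mJ$ (using that $T^{\cC},W,U^{\cC}_i$ already commute with $\cH$, so they lie in the $\cH$-invariant tensor factor); what remains is a normally ordered polynomial in $T^{\cC},W,U^{\cC}_0,\dots,U^{\cC}_5$ alone. This shows these eight fields strongly generate $\cC^k$, and by a weight count they sit one in each weight $2,3,4,5,6,7,8,9$.

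Finally, for minimality I would rule out any decoupling relation among $\{T^{\cC},W,U^{\cC}_i\}$: such a relation would, upon tensoring with $\cH$, yield a relation among $\{J,T,W,U_{0,i}\mid 0\le i\le 5\}$ in $\cW^k(\mathfrak{sl}_4, f_{\text{subreg}})^{U(1)}$ below weight $10$, contradicting the minimality asserted in the previous theorem (the first relation occurs in weight $10$, involving $U_{0,6}$). Hence $\cC^k$ is of type $\cW(2,3,4,5,6,7,8,9)$.

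\medskip
\noindent\emph{Main obstacle.} The substantive point is verifying the \emph{existence} of the correction terms $\omega_i$ with the required properties and, relatedly, that no unexpected relation appears below weight $10$ once one passes to $\cC^k$. The existence of $\omega_i$ follows abstractly from the tensor decomposition, but checking that $U^{\cC}_i$ together with $T^{\cC}$ and $W$ genuinely form a \emph{minimal} set---i.e.\ that the projection does not collapse two of them or introduce a spurious relation---requires the explicit OPE data of Section~3 and is where the computer computation is essential. The conceptual framework is routine; the arithmetic of the correction terms in weights up to $9$ is the real work.
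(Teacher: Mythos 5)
Your proposal is correct and takes essentially the same route as the paper: the paper also relies on the decomposition $\cW^k(\mathfrak{sl}_4, f_{\text{subreg}})^{U(1)} \cong \cH \otimes \cC^k$, produces the corrections $\omega_i$ by the explicit (computer) solution of the linear conditions $J_{(n)}U^{\cC}_i = 0$ for $n\geq 0$, and then deduces strong generation and minimality of $\{T^{\cC}, W, U^{\cC}_i\}$ from the minimal strong generating set of the orbifold established in the preceding theorem. Your projection-onto-$\unit\otimes\cC^k$ formulation is just the conceptual packaging of that same argument, with the same acknowledgment that the weight-by-weight computation of the $\omega_i$ is where the actual work lies.
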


Next, let $ \cW_k(\mathfrak{sl}_4, f_{\text{subreg}})$ denote the simple quotient of $ \cW^k(\mathfrak{sl}_4, f_{\text{subreg}})$ by its maximal proper ideal graded by conformal weight, and let $\cC_k =  \text{Com}(\cH, \cW^k(\mathfrak{sl}_4, f_{\text{subreg}}))$. Evidently we have a surjective map $$\cC^k \rightarrow \cC_k,$$ so for $k \neq -2, -5/2, -8/3$, $\cC_k$ is strongly generated by the fields above.

\section{Simple current extensions and $\cW_{\ell}(\mathfrak{sl}_n, f_{\text{reg}})$}

Vertex operator algebra extensions of a given vertex algebra $V$ can be efficiently studied using commutative, associative algebras with injective unit in the representation category of $V$. This has been developed in \cite{KO, HKL, CKM} and especially structure about parafermionic cosets, i.e. cosets by a Heisenberg or lattice vertex algebra, has been derived in \cite{CKL, CKLR, CKM}. Here we use these ideas to construct simple current extensions of rational, regular $\cW$-algebras of type $A$ tensored with certain lattice vertex operator algebras. Recall that a simple current is an invertible object in the tensor category of the vertex operator algebra.

Let $n, r$ be in $\ZZ_{>1}$ such that $n+1$ and $n+r$ are coprime (so that especially $nr$ is even) and define
\[
\cW(n, r) := \cW_{\ell}(\mathfrak{sl}_n, f_{\text{reg}}), \qquad \ell+n = \frac{n+r}{n+1}.
\] By \cite{Ar1}, $\cW(n, r)$ is rational and $C_2$-cofinite. Let $L=\sqrt{nr}\ZZ$ and $V_L$ the lattice vertex operator algebra of $L$. Modules and their fusion rules for $\cW(n, r)$ are essentially known due to \cite{FKW, AvE}. Modules are parameterized by modules of $L_{r}\left(\mathfrak{sl}_n\right)$, i.e. by integrable positive weights of $\widehat{\mathfrak{sl}}_n$ at level $r$. Fusion rules (Theorem 4.3, Proposition 4.3 of \cite{FKW} together with Corollary 8.4 of \cite{AvE}) imply that the group of simple currents is $\ZZ/n\ZZ$ and these simple currents correspond to the modules $\mathbb L_{r\omega_i}$ with $\omega_i$ the fundamental weights of $\mathfrak{sl}_n$. The question of extending a given regular vertex algebra by a group of simple currents to a larger one is entirely decided by conformal dimension and quantum dimension of the involved simple currents. One gets a vertex operator superalgebra if and only if conformal dimensions of a set of generators of the group of simple currents are in $\frac{1}{2}\ZZ$. Moreover the quantum dimension of generators of the group of simple currents decide whether this is even a vertex operator algebra. See \cite{CKL} for details. 

By the quantum dimension of a module $M$ we mean the categorical dimension of $M$. By Verlinde's formula \cite{H1, H2} one has
\[
\text{qdim}(M) = \frac{S_{M, V}}{S_{V, V}}
\]
with $S$-matrix of the modular transformation of torus one-point functions $\mathrm{ch}[M](v, \tau):= \mathrm{tr}_M(o(v)q^{L_0-c/24})$ ($v$ in $V$ of conformal weight $k$ and $o(v)$ the zero-mode of $v$)
\[
\mathrm{ch}[{M}](v,-1/\tau) = \tau^k\sum_{N}  S_{M, N}\,\mathrm{ch}[{N}](v,\tau)\,.
\]
The sum here is over all inequivalent modules of $V$. See \cite{CG} for a review on modular and categorical aspects of vertex algebras. 

The quantum dimension and conformal dimension of $\mathbb L_{r\omega_1}$ are now easily computed using the recent results of van Ekeren and Arakawa \cite{AvE}:
\begin{equation}
\begin{split}
\text{qdim} \left(\mathbb L_{r\omega_1} \right) &= \frac{S_{r\omega_1, 0}}{S_{0, 0}} = e^{2\pi i r\left(\omega_1, \rho\right)}\text{qdim} \left(L_{r\omega_1} \right) = e^{2\pi i r\left(\omega_1, \rho\right)}\\ &= e^{2\pi i r\frac{n-1}{2}}=(-1)^{r(n-1)}=(-1)^r 
\end{split}
\end{equation}
Here $\text{qdim} \left(L_{r\omega_1} \right)$ is the quantum dimension of the $L_{r}\left(\mathfrak{sl}_n\right)$ module $L_{r\omega_1}$.  We firstly used that the modular $S$-matrices of $\cW(n, r)$ and of $L_r(\mathfrak{sl}_n)$ only differ by the factor $e^{2\pi i r\left(\omega_1, \rho\right)}$ with $\rho$ the Weyl vector of $\mathfrak{sl}_n$. Secondly we used that $L_r(\mathfrak{sl}_n)$ is unitary and hence all quantum dimensions are positive and so every simple current of $L_r(\mathfrak{sl}_n)$ must have quantum dimension one. Finally, in the last equality we used that $nr$ is even. 
The conformal dimension is
\[
\Delta(\mathbb L_{r\omega_1}) = \frac{(n+1)}{2(n+r)}\left( r\omega_1, r\omega_1+2\rho\right) -(\omega_1, \rho) = \frac{(n-1)r}{2n}
\]
since $\omega_1^2= \frac{n-1}{n}$ and $(\omega_1, \rho)=\frac{n-1}{2}$. We denote by $V_{L+\gamma}$ the $V_L$-module corresponding to the coset $L+\gamma$ of $L$ in the dual lattilce $L'=\frac{1}{\sqrt{nr}}\ZZ$. Then $V_{L+\frac{r}{\sqrt{rn}}}$ has conformal dimension $\frac{r}{2n}$ and quantum dimension one since $V_L$ is unitary. It follows from \cite{CKL} (see the Theorems listed in the introduction of that work) that 
\begin{equation}
A(n, r) \cong  \bigoplus_{s=0}^{n-1} V_{L+\frac{rs}{\sqrt{rn}}} \otimes \underbrace{\mathbb L_{r\omega_1}\boxtimes_{\cW(n, r)} \dots \boxtimes_{\cW(n, r)}  \mathbb L_{r\omega_1}}_{s-\text{times}}
\end{equation}
is a vertex operator algebra extending $V_L \otimes \cW(n, r)$. If $r$ is even, this is a $\ZZ$-graded vertex operator algebra, while for odd $r$ it is only $\frac{1}{2}\ZZ$-graded. 
The subspace of lowest conformal weight in each of the $\mathbb L_{r\omega_1}\boxtimes_{\cW(n, r)} \dots \boxtimes_{\cW(n, r)}  \mathbb L_{r\omega_1}$ is one-dimensional, and we denote the corresponding vertex operators by $X_s$. By Proposition 4.1 of \cite{CKL} the OPE of $X_s$ and $X_{n-s}$ has a non-zero multiple of the identity as leading term. Without loss of generality, we may rescale $X_1$ and $X_{n-1}$ so that
\begin{equation} \label{norm1} 
X_1(z)X_{n-1}(w) \sim  \prod_{i=1}^{n-1}(i(k+n-1)-1) (z-w)^{-r} + \dots.
\end{equation}
Let $J$ be the Heisenberg field of $V_L$ and we normalize it such that
\begin{equation} \label{norm2} 
J(z)J(w) \sim \bigg(\frac{(n-1)k}{n} +n-2\bigg) (z-w)^{-2}.\end{equation}
Then we have
\begin{equation} \label{norm3} 
J(z)X_1(w) \sim X_1(w)(z-w)^{-1}, \qquad J(z)X_{n-1}(w) \sim - X_{n-1}(w) (z-w)^{-1}.
\end{equation}
Let $\widetilde A(n, r)$ be the vertex algebra generated by $X_1$ and $X_{n-1}$ under operator products
We now rephrase a physics conjecture \cite{BEHHH}, 
\begin{conj} \label{conj:main}
Let $n, r$ as above and $k$ defined by $k+r=\frac{n+r}{r-1}$. Then
\[
A(n, r) \cong \widetilde A(n, r) \cong \cW_k(\mathfrak{sl}_r, f_{\text{subreg}}).
\]
In particular, $\cW_k(\mathfrak{sl}_r, f_{\text{subreg}})$ is rational and $C_2$-cofinite.  
\end{conj}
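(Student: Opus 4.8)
The plan is to establish the two isomorphisms in turn, proving $\widetilde A(n,r)\cong\cW_k(\mathfrak{sl}_r,f_{\text{subreg}})$ first and then upgrading $\widetilde A(n,r)$ to all of $A(n,r)$. By Genra's theorem together with \cite{FS}, the universal algebra $\cW^k(\mathfrak{sl}_r,f_{\text{subreg}})=\cW^{(2)}_r$ is freely generated by a weight-$1$ Heisenberg field $J$, fields $T,W^{(3)},\dots,W^{(r-1)}$ of weights $2,3,\dots,r-1$, and two weight-$r/2$ fields $G^{\pm}$ of $J$-charge $\pm1$. On the extension side, the conformal weights recorded above give $\Delta(X_1)=\frac{r}{2n}+\frac{(n-1)r}{2n}=\frac r2$, and \eqref{norm3} shows that $X_1,X_{n-1}$ carry $J$-charge $\pm1$; thus $X_1,X_{n-1}$ are the candidate images of $G^{\pm}$, and the field $J$ of \eqref{norm2} is the candidate image of the Heisenberg generator. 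The first goal is therefore to construct a vertex algebra homomorphism $\phi\colon\cW^k(\mathfrak{sl}_r,f_{\text{subreg}})\to\widetilde A(n,r)$ sending $G^{\pm}\mapsto X_{1},X_{n-1}$ and $J\mapsto J$, with $T,W^{(i)}$ mapping to the fields produced in the singular part of $X_1(z)X_{n-1}(w)$, the $(z-w)^{-j}$ term contributing a field of weight $r-j$.

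The crucial structural input, exactly as in point (3) of the sketch, is the rigidity of freely generated algebras of Feigin--Semikhatov type: once the generators and their weights and $J$-charges are fixed, the Jacobi relations \eqref{jacobi} determine every OPE structure constant as a fixed rational function of a small amount of initial data --- the central charge, the coefficient in $J(z)J(w)$, the leading terms of $G^+(z)G^-(w)$, and the leading coefficients of $W^{(i)}(z)G^{\pm}(w)$. To make $\phi$ well defined it suffices to verify that these finitely many numbers, computed in $\widetilde A(n,r)$ from \eqref{norm1}--\eqref{norm3} and the coset Virasoro field $T^{\cC}$, agree with the corresponding numbers of $\cW^k(\mathfrak{sl}_r,f_{\text{subreg}})$ under the substitution $k+r=\frac{n+r}{r-1}$. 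Since $\cW^k(\mathfrak{sl}_r,f_{\text{subreg}})$ is freely generated, agreement of the initial data forces all defining OPEs to hold in $\widetilde A(n,r)$, so $\phi$ exists and is surjective onto $\widetilde A(n,r)$ by construction.

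To identify the image I would first show $\widetilde A(n,r)=A(n,r)$: the current $X_1$ generates the full simple-current group $\ZZ/n\ZZ$, so its iterated fusion products realize each $\mathbb L_{r\omega_1}^{\boxtimes s}$, repeated normally ordered products of $X_1$ and $X_{n-1}$ reach every graded summand $V_{L+\frac{rs}{\sqrt{rn}}}\otimes\mathbb L_{r\omega_1}^{\boxtimes s}$, and the leading identity term of \eqref{norm1} recovers the identity component $V_L\otimes\cW(n,r)$. Since $A(n,r)$ is a simple-current extension of the rational, hence simple, algebra $V_L\otimes\cW(n,r)$, it is simple; therefore $\ker\phi$ is the maximal proper graded ideal and $\phi$ descends to an isomorphism $\cW_k(\mathfrak{sl}_r,f_{\text{subreg}})\cong\widetilde A(n,r)=A(n,r)$. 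Rationality and $C_2$-cofiniteness then follow because $A(n,r)$ is a finite simple-current extension of the rational, $C_2$-cofinite algebra $V_L\otimes\cW(n,r)$ (using \cite{Ar1}), and such extensions preserve both properties.

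The main obstacle is the rigidity step for general $r$. For $r=4$ the OPEs are available in closed form \eqref{OPE:FS4a}--\eqref{OPE:FS4d}, so the initial data can be compared by direct computation; for arbitrary $r$ they are not, and one must argue uniformly. Concretely, this requires (i) determining the minimal strong generating type of the universal coset $\cC^k=\text{Com}(\cH,\cW^k(\mathfrak{sl}_r,f_{\text{subreg}}))$ by controlling the decoupling relations that truncate the a priori infinite family $U_{0,m}=\ :G^+\,\partial^m G^-:$ coming from $U(1)$-invariant theory, replacing the explicit weight-$10$ relation of the $r=4$ case by one valid for all $r$, and (ii) showing that the finitely many initial OPE coefficients are genuinely forced by the central charge and the $J$-charges alone, so that the identification $\cC_k\cong\cW(n,r)$ can be read off without case-by-case computation. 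A uniform-in-$r$ handle on these decoupling relations, and on the singular vectors producing the uniform truncation property, is the essential ingredient still needed.
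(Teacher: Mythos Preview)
This statement is a \emph{conjecture} in the paper, not a theorem: the paper does not prove it in general. It establishes only the case $r=4$, and even then under the extra hypothesis that $n-1$ is coprime to at least one of $n+1$ and $n+4$ (Theorem~\ref{main}). Your proposal correctly recognizes this and names the rigidity step for general $r$ as the missing ingredient, so there is no real discrepancy to diagnose: neither you nor the paper has a proof of the full conjecture.

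For $r=4$ your strategy coincides with the paper's. The paper writes the most general ansatz for $X_1(z)X_{n-1}(w)$ in terms of $J,T,W$, imposes Jacobi identities of types $(J,X_1,X_{n-1})$, $(T,X_1,X_{n-1})$, $(X_1,X_1,X_{n-1})$ to fix all coefficients up to a rescaling of $W$, then uses Jacobi identities of types $(T,W,X_1)$, $(J,W,X_1)$, $(W,X_1,X_{n-1})$ to force $W(z)X_{1}(w)$, $W(z)X_{n-1}(w)$, and $W(z)W(w)$. Since the resulting OPE algebra matches \eqref{OPE:FS4a}--\eqref{OPE:FS4d} and both algebras are simple with the same strong generators, they are isomorphic. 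You phrase this as building a surjection $\phi$ from the universal $\cW^k$ and passing to the simple quotient, which is equivalent. For $\widetilde A(n,4)=A(n,4)$ the paper uses that $W$ appears in the OPE \eqref{ope:xx} and, by Proposition~A.3 of \cite{ALY}, already generates $\cW(n,4)$; your argument via iterated fusion of $X_1$ is in the same spirit.

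One correction to your list of obstacles: your item (i), determining the minimal strong generating type of $\cC^k$ via decoupling of the $U_{0,m}$, is not needed for the conjecture. In the paper that analysis (Theorem~\ref{truncation}) is logically independent of Theorem~\ref{main} and is used only for the subsequent corollary on uniform truncation. The genuine obstructions for general $r$ are that the OPEs of $\cW^k(\mathfrak{sl}_r,f_{\text{subreg}})$ are not available in closed form, so one cannot simply ``compare initial data'' and run the Jacobi-rigidity argument, and that one must know the fields appearing in the singular part of $X_1(z)X_{n-1}(w)$ actually generate all of $\cW(n,r)$; for $r=4$ the latter is supplied externally by \cite{ALY}. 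You also omit the extra coprimality hypothesis on $n-1$, which the paper needs to make the fusion products $\mathbb L_{r\omega_1}^{\boxtimes s}$ explicit enough to read off the graded structure of $A(n,r)$.
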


We remark that Conjecture \ref{conj:main} is true for $r=2, 3$ by \cite{ALY}  and \cite{ACLI} and we will now prove it for $r=4$ under some extra condition on $n$. 
For this, we now assume that $n-1$ is co-prime to at least one of $n+1$ and $n+r$ so that especially $n$ even would work. Under this condition the formula for fusion rules is more explicit, and we know from the fusion rules of $\cW(n, r)$ \cite{AvE} that
\begin{equation}
\begin{split}
A(n, r) &\cong  \bigoplus_{s=0}^{n-1} V_{L+\frac{rs}{\sqrt{rn}}} \otimes \underbrace{\mathbb L_{r\omega_1}\boxtimes_{\cW(n, r)} \dots \boxtimes_{\cW(n, r)}  \mathbb L_{r\omega_1}}_{s-\text{times}}\\
&\cong \bigoplus_{s=0}^{n-1} V_{L+\frac{rs}{\sqrt{rn}}} \otimes \mathbb L_{r\omega_s}.
\end{split}
\end{equation}
The lowest conformal weight of the $s$-th summand is min$\{ \frac{sr}{2}, \frac{(n-s)r}{2}\}$ and so in this instance $\widetilde A(n, r)$ is strongly generated by $X_1, X_{n-1}$ together with the Heisenberg field $J$ and some fields of $\cW(n, r)$.

\begin{theo} \label{main} Conjecture \ref{conj:main} holds for $r = 4$ and all $n$ such that $n-1$ is co-prime to at least one of $n+1$ and $n+4$.
\end{theo}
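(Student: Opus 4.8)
The plan is to prove the two isomorphisms in Conjecture~\ref{conj:main} for $r=4$ by a two-step strategy: first identify $\widetilde{A}(n,4)$ with $\mathcal{W}_k(\mathfrak{sl}_4, f_{\text{subreg}})$ using the explicit structure theory from Sections~3--5, and then identify $\widetilde{A}(n,4)$ with $A(n,4)$ inside the extension. For the first step, I would show that $\widetilde{A}(n,4)$ — the subalgebra of $A(n,4)$ generated by $X_1$, $X_3$ under operator products — contains a copy of $\mathcal{W}^k(\mathfrak{sl}_4, f_{\text{subreg}})$ for the level $k$ determined by $k+4 = \frac{n+4}{3}$ (equivalently $k = -4 + (m+4)/3$ with $m = n$). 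Concretely: $J$ (the Heisenberg field of $V_L$) together with $X_1, X_3$ should generate fields playing the roles of $J, T, W, G^{\pm}$. The field $G^+$ should be $X_1$ and $G^-$ should be $X_3$ (these have weight $r/2 = 2$, matching the weights of $G^\pm$), $J$ matches $J$, and $T, W$ must be extracted from the OPE $X_1(z)X_3(w)$. Comparing \eqref{norm2} with $J(z)J(w) \sim (2 + 3k/4)(z-w)^{-2}$ from \eqref{OPE:FS4a} pins down the normalization: $(n-1)k/n + n - 2 = 2 + 3k/4$, which together with $k+4 = (n+4)/3$ should be an identity — this is a consistency check I would verify first. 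Then the leading coefficient $\prod_{i=1}^{3}(i(k+3)-1)$ in \eqref{norm1} should, after the substitution $k+4=(n+4)/3$, equal $(2+k)(5+2k)(8+3k)$, the leading term of $G^+(z)G^-(w)$ in \eqref{OPE:FS4c} — another check. The heart of step one is the observation (point (3) of the introduction's sketch) that the Jacobi identity forces the \emph{entire} OPE algebra generated by $J, X_1, X_3$ to be determined by a finite amount of data: the weights and charges of the generators, the central charge, and a handful of structure constants. Since $\mathcal{W}^k(\mathfrak{sl}_4, f_{\text{subreg}})$ is freely generated of type $\mathcal{W}(1,2,2,2,3)$ and its OPEs are given by \eqref{OPE:FS4a}--\eqref{OPE:FS4d}, once I match this finite data I get a surjective vertex algebra homomorphism $\mathcal{W}^k(\mathfrak{sl}_4, f_{\text{subreg}}) \to \widetilde{A}(n,4)$; since $\widetilde{A}(n,4)$ sits inside the simple VOA $A(n,4)$ and $J, X_1, X_3$ generate it, this map factors through and in fact induces an isomorphism $\mathcal{W}_k(\mathfrak{sl}_4, f_{\text{subreg}}) \cong \widetilde{A}(n,4)$ (surjectivity onto a vertex algebra that is a quotient of the simple one, plus the fact that $\widetilde A(n,4)$ is itself simple as a subalgebra generated by the charged simple-current fields of a simple extension).

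For the second step, $\widetilde{A}(n,4) \cong A(n,4)$: by the remark following Theorem~\ref{main}'s statement, under the coprimality hypothesis the decomposition collapses to $A(n,4) \cong \bigoplus_{s=0}^{n-1} V_{L + \frac{4s}{\sqrt{4n}}} \otimes \mathbb{L}_{4\omega_s}$, and the lowest-weight space of the $s$-th summand has weight $\min\{2s, 2(n-s)\}$, so all summands are generated by iterated fusion products (equivalently, iterated operator products) of $X_1$ and $X_3$; the lattice part $V_L$ and the $\mathcal{W}(n,4)$ part must then also lie in the algebra generated by $X_1, X_3$ because $A(n,4)$ is simple and $V_L \otimes \mathcal{W}(n,4)$ is recovered as (a subalgebra of) the charge-zero part of products $X_1 X_3$. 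More carefully: $X_1(z)X_3(w)$ produces, in its charge-zero components, the field $J$ (from \eqref{norm3}) and fields of $V_L \otimes \mathcal{W}(n,4)$; by rationality/$C_2$-cofiniteness of $\mathcal{W}(n,4)$ (Arakawa \cite{Ar1}) and the fact that $\mathbb{L}_{4\omega_1}$ is a simple current generating all of $\mathbb{Z}/n\mathbb{Z}$, the fusion products $\mathbb{L}_{4\omega_1}^{\boxtimes s} = \mathbb{L}_{4\omega_s}$ exhaust the grading, giving $\widetilde{A}(n,4) = A(n,4)$. Rationality and $C_2$-cofiniteness of $\mathcal{W}_k(\mathfrak{sl}_4, f_{\text{subreg}})$ then follow because $A(n,4)$ is a simple current extension of the rational, $C_2$-cofinite VOA $V_L \otimes \mathcal{W}(n,4)$, hence is itself rational and $C_2$-cofinite \cite{CKL, HKL}.

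I would organize the argument as: (i) fix $k$ via $k+4 = (n+4)/3$ and verify the numerical identities matching \eqref{norm1}, \eqref{norm2} against \eqref{OPE:FS4a}, \eqref{OPE:FS4c}; (ii) define $T, W$ as specific normally ordered expressions in the $(z-w)^{-2}$, $(z-w)^{-1}$ terms of $X_1(z)X_3(w)$ and check they satisfy \eqref{OPE:FS4a}--\eqref{OPE:FS4d} — this uses the Jacobi identity \eqref{jacobi} to reduce to finitely many structure constants; (iii) conclude $\mathcal{W}^k(\mathfrak{sl}_4, f_{\text{subreg}}) \twoheadrightarrow \widetilde{A}(n,4)$ and then $\mathcal{W}_k(\mathfrak{sl}_4, f_{\text{subreg}}) \cong \widetilde{A}(n,4)$; (iv) prove $\widetilde{A}(n,4) = A(n,4)$ via the collapsed decomposition and simple-current fusion; (v) deduce rationality and $C_2$-cofiniteness from \cite{CKL, HKL}. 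The main obstacle is step (ii): showing that the structure constants of the OPE of $X_1, X_3$ (and the induced $T, W$) genuinely match those of the Feigin--Semikhatov algebra $\mathcal{W}^{(2)}_4$ at the specified level. A priori one knows the \emph{shape} of the OPE (conformal weight, charge, and the leading coefficient \eqref{norm1}) but not that every subleading structure constant agrees; the key input that saves the argument is that, once the generators' weights, charges, the central charge $c$, and the leading terms of $JJ$, $JX_1$, $JX_3$, and $X_1X_3$ are fixed, the Jacobi identity propagates to determine all remaining structure constants uniquely (both in $\mathcal{W}^k(\mathfrak{sl}_4, f_{\text{subreg}})$, because it is freely generated, and in $\widetilde{A}(n,4)$, once one knows it has a strong generating set of the same type $\mathcal{W}(1,2,2,2,3)$). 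Establishing that $\widetilde{A}(n,4)$ is of type $\mathcal{W}(1,2,2,2,3)$ — i.e. that no new strong generators appear beyond $J, T, W, X_1, X_3$ — is where the weight bookkeeping from the collapsed decomposition and Theorem~\ref{truncation} is used, since the charge-zero part of $\widetilde{A}(n,4)$ is (a quotient of) $\mathcal{W}^k(\mathfrak{sl}_4, f_{\text{subreg}})^{U(1)} \cong \mathcal{H} \otimes \mathcal{C}^k$, already known to be of type $\mathcal{W}(1,2,3,\dots,9)$, and the two weight-two currents $X_1, X_3$ together with $J, T, W$ suffice.
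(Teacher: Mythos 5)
Your proposal follows essentially the same route as the paper: write the most general OPE of $X_1(z)X_{n-1}(w)$ in terms of $J,T,W$ allowed by weight and charge, use Jacobi identities to force all structure constants to coincide with those of $\cW^k(\mathfrak{sl}_4,f_{\text{subreg}})$ at $k+4=\tfrac{n+4}{3}$, and conclude the isomorphism with the simple quotient from simplicity plus the fact that $W$ (appearing with nonzero coefficient) generates $\cW(n,4)$, so $\widetilde A(n,4)=A(n,4)$. Two small points: your $X_3$ should be $X_{n-1}$ throughout, and since a subalgebra of a simple VOA need not be simple, you should establish $\widetilde A(n,4)=A(n,4)$ (your step (iv)) \emph{before} invoking simplicity to identify it with $\cW_k(\mathfrak{sl}_4,f_{\text{subreg}})$, which is exactly the order the paper uses.
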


\begin{proof} Let $L$ be the Virasoro field of $\cW(n,r)$ and let $T = L + \frac{2}{8 + 3 k}:JJ:$ be the Virasoro field of $V_L \otimes \cW(n,r)$. Also, let $W$ be the weight $3$ field of $\cW(n,r)$ which is known to generate $\cW(n,r)$. Since the OPE of $X_1(z) X_{n-1}(w)$ can be expressed in terms of $J, T,W$, the most general form is 
\begin{equation} \begin{split}X_1(z) X_{n-1} (w) & \sim  (2 + k) (5 + 2 k) (8 + 3 k) (z-w)^{-4} +  a_{1} J(w)(z-w)^{-3}
\\ &+ \bigg(a_{2} T + a_{3} :JJ: +a_{4} \partial J\bigg)(w)(z-w)^{-2} \\ &+ \bigg( a_{5} W + a_{6} :JJJ: + a_{7} :TJ: +a_{8} :(\partial J)J: + a_{9}\partial T + a_{10} \partial^2 J\bigg)(w)(z-w)^{-1},\end{split} \end{equation} where the $a_i$ are constants.
By imposing all Jacobi relations of the form $(J, X_1, X_{n-1})$ and $(T, X_1, X_{n-1})$ we obtain all the above coefficients uniquely except for $a_5$, that is, 
\begin{equation} \begin{split}\label{ope:xx} X_1(z) X_{n-1} (w) &\sim  (2 + k) (5 + 2 k) (8 + 3 k) (z-w)^{-4} +  4 (2 + k) (5 + 2 k) J(w)(z-w)^{-3}
\\ & + \bigg(-(2 + k) (4 + k) T + 6 (2 + k) :JJ: +2 (2 + k) (5 + 2 k) \partial J\bigg)(w)(z-w)^{-2} \\ &+ \bigg( a_{5} W + \frac{8 (2 + k) (32 + 11 k)}{3 (8 + 3 k)^2} :JJJ:   -\frac{4 (2 + k) (4 + k)}{8 + 3 k} :TJ: \\ &+6 (2 + k) :(\partial J)J:  -\frac{1}{2} (2 + k) (4 + k) \partial T 
\\ & + \frac{4 (2 + k) (26 + 17 k + 3 k^2)}{3 (8 + 3 k)} \partial^2 J\bigg)(w)(z-w)^{-1}.\end{split} \end{equation}

Using the OPE relations \eqref{norm3}, and the Jacobi relations of type $(X_1, X_1, X_{n-1})$, we see that $a_5 \neq 0$. Since we are free to rescale the field $W$, we may assume without loss of generality that $$a_5 = (k+2).$$ This completely determines $X_1(z)X_{n-1}(w)$. Also, since $W$ appears in $\widetilde A(n, 4)$ and generates $\cW(n,4)$ (see Proposition A.3 of \cite{ALY}), we must have $\widetilde A(n, 4) = A(n, 4)$.

Next, imposing all Jacobi relations of type $(T,W,X_1)$, $(J, W, X_1)$, $(T,W,X_{n-1})$ and $(J, W, X_{n-1})$ uniquely determines the OPEs 
\begin{equation} \label{ope:wx} W(z)X_1(w),\qquad W(z) X_{n-1}(w).\end{equation}
Finally, using \eqref{norm1}-\eqref{norm3} and \eqref{ope:xx}-\eqref{ope:wx} and imposing all Jacobi relations of type $(W, X_1, X_{n-1})$ uniquely determines the OPE of $W(z) W(w)$. In particular, these OPE relations are precisely the OPE relations in $\cW_k(\mathfrak{sl}_4, f_{\text{subreg}})$ with $X_1, X_{n-1}$ replaced by $G^+, G^-$. Since $A(n,4)$ and $\cW_k(\mathfrak{sl}_4, f_{\text{subreg}})$ are simple vertex algebras with the same strong generating set and OPE algebra, they must be isomorphic. \end{proof}

\begin{corol} Let $k$ be defined by $k+4=\frac{n+4}{3}$, and assume that $n-1$ is co-prime to at least one of $n+1$ and $n+4$. Then $\cW(n,4)$ is strongly generated by the fields in weights $2,3,4,5,6,7,8,9$ even though the universal regular $\cW$-algebra of $\mathfrak{sl}_n$ is of type $\cW(2,3,\dots, n)$.
\end{corol}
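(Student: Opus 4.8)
The plan is to deduce the corollary directly from Theorem~\ref{main} together with Theorem~\ref{truncation}. By Theorem~\ref{main}, for the level $k$ with $k+4 = \frac{n+4}{3}$ and $n-1$ coprime to at least one of $n+1$ and $n+4$, we have an isomorphism $\cW_k(\mathfrak{sl}_4, f_{\text{subreg}}) \cong A(n,4)$, which is a simple current extension of $V_L \otimes \cW(n,4)$. First I would observe that the Heisenberg field $J$ appearing in $A(n,4)$ under this isomorphism is precisely the weight one generator of $\cW_k(\mathfrak{sl}_4, f_{\text{subreg}})$ that generates the copy of $\cH$, since its OPE with itself is fixed by \eqref{norm2} and matches the $J(z)J(w)$ OPE in \eqref{OPE:FS4a} after expressing $k$ in terms of $n$. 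Consequently, $\text{Com}(\cH, \cW_k(\mathfrak{sl}_4, f_{\text{subreg}})) = \cC_k$ corresponds under the isomorphism to $\text{Com}(V_L, A(n,4))$, and the standard parafermion-coset analysis (as in \cite{CKL, CKLR, CKM}) identifies this commutant with $\cW(n,4)$ itself: the lattice part $V_L$ together with its modules $V_{L+\frac{4s}{\sqrt{4n}}}$ accounts for all the ``charged'' pieces, so the commutant of the Heisenberg/lattice subalgebra collapses the direct sum to the $s=0$ summand $\cW(n,4)$.

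Next I would invoke Theorem~\ref{truncation}, which states that $\cC^k$ is of type $\cW(2,3,4,5,6,7,8,9)$ for all $k \neq -2,-5/2,-8/3$, and that this strong generating set $\{T^{\cC}, W, U^{\cC}_i \mid 0 \leq i \leq 5\}$ descends to the simple quotient $\cC_k$. I would check that the levels $k$ in the corollary (those with $k+4 = \frac{n+4}{3}$ for $n > 2$ as in the hypothesis) indeed avoid the three excluded values: solving $\frac{n+4}{3}-4 = -2$ gives $n = 2$, which is excluded since $n > 1$ and the coprimality forces $n \geq$ something larger anyway; the values $-5/2$ and $-8/3$ correspond to $n = 1/2$ and $n = 0$, which are not positive integers. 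Hence for all $n$ in the corollary's range, $\cC_k$ is strongly generated by fields in weights $2,3,4,5,6,7,8,9$.

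Combining the two ingredients: since $\cC_k \cong \cW(n,4)$ under the isomorphism of Theorem~\ref{main}, and $\cC_k$ is of type $\cW(2,3,4,5,6,7,8,9)$, it follows that $\cW(n,4)$ is strongly generated by fields in weights $2,3,4,5,6,7,8,9$, even though the universal $\cW$-algebra $\cW^\ell(\mathfrak{sl}_n, f_{\text{reg}})$ is of type $\cW(2,3,\dots,n)$. This completes the proof. The main subtlety — though it is really bookkeeping rather than a genuine obstacle — is making precise the identification of the Heisenberg coset of $A(n,4)$ with $\cW(n,4)$: one must confirm that no nontrivial part of $\cW(n,4)$ is ``used up'' in the extension and that the $U^{\cC}_i$ from Theorem~\ref{truncation}, built from the charged generators $G^{\pm}$, correspond to genuine elements of the $s=0$ summand under the isomorphism. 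This follows from the fact that $U^{\cC}_i = U_{0,i} + \omega_i$ has $U(1)$-charge zero by construction and hence lies in the commutant, together with the simplicity of both algebras, but it is worth spelling out carefully.
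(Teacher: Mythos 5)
Your proposal is correct and follows essentially the paper's own route: the corollary is deduced from Theorem~\ref{truncation} together with the surjectivity of $\cC^k \rightarrow \cC_k$ and from Theorem~\ref{main}, with the identification of the Heisenberg coset of $A(n,4)$ with the $s=0$ summand $\cW(n,4)$ made explicit. One small correction to your bookkeeping: the value $n=2$ (i.e.\ $k=-2$) is excluded not because $n>1$ fails but because the standing hypothesis that $n+1$ and $n+4$ be coprime fails ($\gcd(3,6)=3$), while $k=-5/2,-8/3$ correspond to non-integral $n$ exactly as you say.
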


\begin{proof} This is immediate from Theorems \ref{truncation} and \ref{main} and the fact that the map $\mathcal{C}^k \rightarrow \mathcal{C}_k$ is surjective. 
\end{proof}

\bibliographystyle{alpha}

\end{document}